\documentclass[a4paper,twoside,12pt]{article}

\usepackage{amssymb,amsmath,amsthm,latexsym}
\usepackage{amsfonts}
\usepackage{graphicx,caption,subcaption}
\usepackage[pdftex,bookmarks,colorlinks=false]{hyperref}
\usepackage[hmargin=1.25in,vmargin=1.25in]{geometry}
\usepackage[mathscr]{euscript}

\usepackage[auth-lg,affil-sl]{authblk}
\setcounter{Maxaffil}{3}

\usepackage{float}
\usepackage{calc,tikz}
\usetikzlibrary{arrows,decorations.markings}
\tikzstyle{vertex}=[circle, draw, inner sep=2pt, minimum size=8pt]
\newcommand{\vertex}{\node[vertex]}

\newtheorem{theorem}{Theorem}[section]
\newtheorem{definition}[theorem]{Definition}

\newtheorem{corollary}[theorem]{Corollary}
\newtheorem{proposition}[theorem]{Proposition}

\newcommand{\noi}{\noindent}

\newcommand{\N}{\mathbb{N}}

\newcommand{\J}{\mathbb{J}}
\newcommand{\sJ}{\mathscr{J}}

\pagestyle{myheadings}
\thispagestyle{empty}
\markboth {\hspace*{-9mm} \centerline{\footnotesize 
		A Note on $J$-Colouring of Jahangir Graphs}
}
{ \centerline {\footnotesize 
		Johan Kok and Sudev Naduvath
	}\hspace*{-9mm}}

\title{\textbf{\sc A Note on $J$-Colouring of Jahangir Graphs}}

\author{Johan Kok}
\affil{\small Centre for Studies in Discrete Mathematics\\ Vidya Academy of Science \& Technology \\ Thrissur-680501, Kerala, India.\\{\tt kokkiek2@tshwane.gov.za}}

\author{Sudev Naduvath\footnote{Corresponding Author}}
\affil{\small Department of Mathematics\\ CHRIST (Deemed to be University) \\ Bengaluru-560029 Karnataka, India.\\{\tt sudevnk@gmail.com}}

\date{}

\begin{document}
\maketitle

\begin{abstract}
\noindent In this paper, we discuss $J$-colouring of the family of Jahangir graphs. Note that the family of Jahangir graphs is a wide ranging family of graphs which by a generalised definition includes wheel graphs. We characterise the subset of Jahangir graphs which admit a $J$-colouring. 
\end{abstract}

\noi \textbf{Keywords:} Jahangir graph, $J$-colouring, rainbow neighbourhood, cordial graph.

\vspace{0.25cm}

\noi \textbf{2010 Mathematics Subject Classification:} 05C15, 05C38, 05C75, 05C85.

\section{Introduction}

For general terminology in graph theory, we refer to \cite{1,3,11}. Unless stated otherwise, all graphs $G$ mentioned in this paper are undirected, finite simple and connected graphs.

Recall that if $\mathcal{C}= \{c_1,c_2,c_3,\ldots,c_\ell\}$, $\ell$ sufficiently large, is a set of distinct colours, a \textit{proper vertex colouring} of a graph $G$, is a vertex colouring $\varphi:V(G) \mapsto \mathcal{C}$ of $G$ such that no two distinct adjacent vertices have the same colour. The minimum number of colours in a proper vertex colouring of $G$ is called the \textit{chromatic number} of $G$ and is denoted $\chi(G)$. 

The number of times a colour $c_i$ is allocated to vertices of a graph $G$ is denoted by $\theta(c_i)$ and the colouring $\varphi:v_i \mapsto c_j$ can be written as $\varphi(v_i)=c_j$. Furthermore, we use another notation $\iota$ in the sense that if $\varphi(v_i)=c_j$, then we say that $\iota(v_i)=j$. Unless mentioned otherwise, we also follow the convention that, among the colours in the coloring $\mathcal{C} =\{c_1,c_2,c_3,\ldots,c_{\ell}\}$, $\ell=\chi(G)$, the colour $c_1$ will be assigned to maximum possible number of vertices in $G$, then colour $c_2$ will be assigned to maximum possible number of remaining uncoloured vertices and proceeding like this, at the final step, the colour $c_\ell$ will be given for the remaining uncoloured vertices. This convention is called the \textit{rainbow neighbourhood convention} (see \cite{5}). Such a colouring is called a \textit{$\chi^-$-colouring} of a graph.


\section{$J$-Colouring of Jahangir graphs}

\begin{definition}\label{Defn-2.1}{\rm 
\cite{0,9} The \textit{Jahangir graph} $J_{n,m},\ n\geq 1,\ m \geq 3$, is a graph on $nm + 1$ vertices consisting of the cycle $C_{nm}$ with an additional central vertex say $u$ which is adjacent to cyclically labeled vertices $v_1, v_2, v_3,\ldots,v_m$ such that $d(v_i,v_{i+1}) = n$, $1\leq i \leq m-1$ in $C_{nm}$.
}\end{definition}

Note that Definition \ref{Defn-2.1} allows the wheel graphs $W_{n'+1}$, $n'\geq 3$ to be part of the family of Jahangir graphs in that $W_{n'+1} = J_{1,n'}$. At this point the authors are aware on only one published generalisation of the definition found in \cite{2}. A good application of Jahangir graphs related to transmitting confidential information between nuclear sites is found in \cite{4}.

The closed neighbourhood $N[v]$ of a vertex $v \in V(G)$ which contains at least one coloured vertex of each colour in the chromatic colouring, is called a rainbow neighbourhood. We say that vertex $v$ yields a rainbow neighbourhood. We also say that vertex $u \in N[v]$ belongs to the rainbow neighbourhood $N[v]$.

\begin{definition}\label{Defn-2.2}{\rm 
\cite{10} A maximal proper colouring of a graph $G$ is a Johan colouring denoted, $J$-colouring, if and only if every vertex of $G$ belongs to a rainbow neighbourhood of $G$. The maximum number of colours in a $J$-colouring is denoted by $\sJ(G)$.
}\end{definition}

\begin{definition}\label{Defn-2.3}{\rm
\cite{10} A maximal proper colouring of a graph $G$ is a modified Johan colouring denoted, $J^*$-colouring, if and only if every internal vertex of $G$ belongs to a rainbow neighbourhood of $G$. The maximum number of colours in a $J^*$-colouring is denoted by $\sJ^*(G)$.
}\end{definition}

Note that, if a graph $G$ admits a $J$-colouring it admits a $J^*$-colouring. However, the converse is not always true. Other interesting result on $J$-colouring are found in \cite{6,7}. We recall three important results from \cite{10}.

\begin{proposition}\label{Prop-2.1}
{\rm \cite{10}} Let $P_n$ denotes a path on $n\geq 3$ vertices. Then, $\sJ(P_n)=2$ and $\sJ^*(P_n)=3$.
\end{proposition}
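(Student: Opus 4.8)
The plan is to sandwich each of the two quantities between a degree-based upper bound and an explicit colouring that meets it.

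First I would record the elementary observation underlying essentially every $J$-colouring argument: if a proper colouring of a graph $G$ uses $k$ colours and a vertex $v$ yields a rainbow neighbourhood, then $N[v]$ must contain at least one vertex of each of the $k$ colours, so $k \le |N[v]| = \deg_G(v)+1$. Now label the vertices of $P_n$ consecutively as $u_1 u_2 \cdots u_n$. Since a $J$-colouring requires \emph{every} vertex to yield a rainbow neighbourhood, the bound must in particular hold at the pendant vertex $u_1$, whence $\sJ(P_n) \le \deg(u_1)+1 = 2$. For a $J^*$-colouring the rainbow condition is imposed only on the internal vertices $u_2,\dots,u_{n-1}$, a nonempty set because $n \ge 3$; each such vertex has degree $2$, so $\sJ^*(P_n) \le 3$.

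Next I would exhibit colourings attaining these bounds. For $\sJ$, take the proper $2$-colouring $\varphi(u_i)=c_1$ for $i$ odd and $\varphi(u_i)=c_2$ for $i$ even; every $u_i$ has a neighbour of the opposite parity and hence of the other colour, so $N[u_i]$ contains both colours and $\varphi$ is a $J$-colouring with $2$ colours, giving $\sJ(P_n)\ge 2$. For $\sJ^*$, take the periodic proper $3$-colouring that assigns to $u_i$ the colour $c_j$ with $j\equiv i \pmod 3$ and $j\in\{1,2,3\}$; consecutive indices have distinct residues modulo $3$, so the colouring is proper, and for every internal vertex $u_i$ the three vertices $u_{i-1},u_i,u_{i+1}$ carry the pairwise distinct residues $i-1,i,i+1 \pmod 3$, so $N[u_i]$ contains all three colours. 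Hence every internal vertex yields a rainbow neighbourhood and $\sJ^*(P_n)\ge 3$. Combining with the upper bounds gives $\sJ(P_n)=2$ and $\sJ^*(P_n)=3$.

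There is no real obstacle here; the only point deserving a moment's care is the behaviour of the periodic $3$-colouring near the two ends of the path when $n$ is not divisible by $3$. But the rainbow condition for a $J^*$-colouring is demanded only at internal vertices, and any three consecutive indices have distinct residues modulo $3$ no matter where they sit along the path, so no boundary effect arises. Likewise the maximality required by Definitions \ref{Defn-2.2} and \ref{Defn-2.3} is automatic, since in each case the number of colours already equals the degree-based upper bound that any admissible colouring must respect.
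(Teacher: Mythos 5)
The paper states Proposition \ref{Prop-2.1} only as a recalled result from \cite{10} and supplies no proof of its own, so there is nothing internal to compare against; judged on its own terms, your argument is correct and is the natural one --- the bound $k\le |N[v]|=\deg(v)+1$ applied at a pendant vertex (for $\sJ$) and at an internal vertex (for $\sJ^*$), matched respectively by the alternating $2$-colouring and the periodic $3$-colouring, with the maximality issue correctly dispatched because each construction already meets its degree bound. The one point worth flagging is that Definition \ref{Defn-2.2} as literally written requires only that every vertex \emph{belong to} a rainbow neighbourhood, under which reading the colouring $c_1,c_2,c_3$ of $P_3$ would be a $J$-colouring with three colours and the proposition would fail; you have silently (and correctly, consistently with \cite{10} and with the paper's later use of the notion, e.g.\ Theorem \ref{Thm-2.3}) read the condition as requiring every vertex to \emph{yield} a rainbow neighbourhood, and your proof is complete under that reading.
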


\begin{theorem}
{\rm \cite{10}} A cycle $C_n$ is $J$-colourable if and only if $n\equiv 0\,({\rm mod}\ 2)$ or $n\equiv 0\,({\rm mod}\ 3)$.
\end{theorem}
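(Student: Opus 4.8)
The plan is to prove the characterization of $J$-colourable cycles by first establishing the necessary structural fact that a cycle $C_n$ has chromatic number $\chi(C_n)=2$ when $n$ is even and $\chi(C_n)=3$ when $n$ is odd, and then analysing when every vertex can be forced to yield a rainbow neighbourhood. Observe that a vertex $v$ in $C_n$ has closed neighbourhood $N[v]=\{v_{-1},v,v_{+1}\}$ of size $3$, so $v$ yields a rainbow neighbourhood only if the colours appearing on these three vertices include all $\chi(C_n)$ colours. I would split into the two easy cases first: if $n\equiv 0\pmod 2$, the proper $2$-colouring alternating $c_1,c_2$ around the cycle is maximal (no third colour can be introduced without losing properness in a $2$-chromatic setting — or rather, any maximal proper colouring using exactly $\chi=2$ colours works here), and every vertex sees both colours among its two neighbours, so $C_n$ is $J$-colourable with $\sJ(C_n)=2$. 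If $n\equiv 0\pmod 3$ but $n$ is odd, then $\chi(C_n)=3$, and the colouring that repeats the block $c_1,c_2,c_3$ exactly $n/3$ times around the cycle is a proper $3$-colouring in which every vertex's closed neighbourhood contains all three colours; one checks this is maximal, giving $J$-colourability with $\sJ(C_n)=3$.

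The substance of the argument is the converse: if $n\not\equiv 0\pmod 2$ and $n\not\equiv 0\pmod 3$, then $C_n$ is not $J$-colourable. Since $n$ is odd, $\chi(C_n)=3$, so in any proper colouring we must use at least $3$ colours, and for a $J$-colouring every vertex's closed neighbourhood of size $3$ must contain all three colours — which forces the colour pattern around the cycle to be, up to relabelling, a strict repetition of $c_1,c_2,c_3,c_1,c_2,c_3,\ldots$. I would argue this by a short local propagation: if $v_{i-1},v_i,v_{i+1}$ carry three distinct colours and likewise $v_i,v_{i+1},v_{i+2}$ carry three distinct colours, then $v_{i+2}$ is forced to receive the colour of $v_{i-1}$, so the colouring is periodic with period $3$. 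For such a periodic colouring to close up consistently around $C_n$ we need $3\mid n$, contradicting $n\not\equiv 0\pmod 3$. Hence no $J$-colouring exists.

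The main obstacle I anticipate is handling maximality correctly in Definition~\ref{Defn-2.2}: a $J$-colouring is required to be a \emph{maximal} proper colouring, not merely a proper colouring in which every vertex yields a rainbow neighbourhood, so I must verify in the positive cases that the exhibited colourings cannot be extended by introducing an additional colour on some vertex while keeping properness and the rainbow-neighbourhood property for all vertices. For the $2$-colouring of an even cycle this needs a small check (recolouring one vertex of a $2$-coloured even cycle with a new colour $c_3$ destroys the rainbow neighbourhood of some vertex not adjacent to it, since only $3$ vertices lie in each closed neighbourhood); for the $3$-colouring of a cycle with $3\mid n$, any proper colouring using $\chi(C_n)+1=4$ or more colours cannot have all closed neighbourhoods, each of size $3$, contain all colours, so maximality is automatic. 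A secondary subtlety is the borderline overlap $n\equiv 0\pmod 6$, where both congruence conditions hold; there the even-cycle argument already settles $J$-colourability, and the periodic-block argument shows consistency, so the two cases agree without conflict. Once these maximality checks are in place, the biconditional follows directly.
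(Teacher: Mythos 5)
The paper does not actually prove this statement: it is recalled verbatim from \cite{10} as a background result, so there is no in-paper proof to compare yours against. Assessed on its own terms, your argument is correct and essentially complete. The two constructive cases (alternating $c_1,c_2$ for even $n$; the repeated block $c_1,c_2,c_3$ when $3\mid n$) are sound, and your converse contains the one genuinely nontrivial step: since $|N[v]|=3$, a colouring in which every closed neighbourhood is rainbow can use at most three colours, while for odd $n$ it must use at least $\chi(C_n)=3$; hence every consecutive triple is a permutation of the three colours, your local propagation $c(v_{i+2})=c(v_{i-1})$ forces period $3$, and closing up around the cycle forces $3\mid n$ (indeed, if $3\nmid n$ the period-$3$ condition would make the colouring constant). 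Your maximality discussion is also handled correctly: the bound $\sJ(C_n)\le 3$ is automatic from $|N[v]|=3$, and a third colour cannot be introduced into the $2$-colouring of an even cycle with $3\nmid n$ by the same propagation argument, so your constructions also recover the values $\sJ(C_n)\in\{2,3\}$ recorded in the paper's subsequent corollary. One caution worth making explicit: you silently read the requirement as ``every vertex \emph{yields} a rainbow neighbourhood'' (i.e.\ $N[v]$ itself contains all colours), whereas Definition~\ref{Defn-2.2} literally asks only that every vertex \emph{belong to} some rainbow neighbourhood. Under the literal reading the colouring $c_1,c_2,c_1,c_2,c_3$ of $C_5$ would qualify and the theorem would be false; the stronger reading is clearly the intended one (it matches the equivalence $r_\chi(G)=n$ quoted in the paper's conclusion), but since your converse depends on it, state the interpretation you are using in one sentence.
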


\begin{corollary}
{\rm \cite{10}} Let $C_n$ be a cycle which admits a $J$-colouring. Then, 
\begin{equation*}
\sJ(C_n)=
\begin{cases}
2 & \text{if}\ n\equiv 0\,({\rm mod}\ 2) \text{and}\ n\not\equiv 0\,({\rm mod}\ 3),\\
3 & \text{if}\ n\equiv 0\,({\rm mod}\ 3).
\end{cases}
\end{equation*}
\end{corollary}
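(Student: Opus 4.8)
The plan is to establish the uniform bound $\sJ(C_n)\le 3$ first and then distinguish the two residue classes of $n$ modulo $3$ that are permitted by the preceding theorem. For the upper bound, I would observe that for every $i$ the closed neighbourhood $N[v_i]=\{v_{i-1},v_i,v_{i+1}\}$ has exactly three vertices and hence meets at most three colour classes; since in a $J$-colouring every vertex must yield a rainbow neighbourhood, that is, every $N[v_i]$ must contain a vertex of each colour used, no $J$-colouring of $C_n$ can use more than three colours. On the other hand every $J$-colouring is in particular proper, so $\sJ(C_n)\ge\chi(C_n)$, which is $2$ when $n$ is even and $3$ when $n$ is odd.

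For the case $n\equiv 0\pmod 3$ I would write down the periodic colouring assigning to $v_i$ the colour determined by $i\bmod 3$, i.e.\ the block $c_1,c_2,c_3$ repeated around the cycle. Since $3\mid n$ this is well defined on $C_n$ and clearly proper, and for each $i$ the three vertices $v_{i-1},v_i,v_{i+1}$ carry three consecutive residues modulo $3$, hence all three colours, so every $N[v_i]$ is a rainbow neighbourhood. This exhibits a $J$-colouring with three colours, and together with the upper bound it gives $\sJ(C_n)=3$.

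The remaining case is $n\not\equiv 0\pmod 3$; by the preceding theorem the assumption that $C_n$ is $J$-colourable then forces $n$ even, so $\sJ(C_n)\ge\chi(C_n)=2$ (equivalently, the alternating colouring $c_1,c_2,c_1,c_2,\dots$ makes every $N[v_i]=\{c_1,c_2\}$ rainbow). What remains, and what I expect to be the only step needing an argument rather than a routine check, is to rule out a $J$-colouring with three colours. Here I would identify the colour set with $\mathbb{Z}_3$: if every $N[v_i]$ is rainbow then $\{c_{i-1},c_i,c_{i+1}\}=\mathbb{Z}_3$, so $c_{i-1}+c_i+c_{i+1}\equiv 0\pmod 3$ for all $i$; subtracting the relation at index $i$ from the one at index $i+1$ yields $c_{i+2}=c_{i-1}$, so the colour sequence is periodic with period $3$ along the cycle. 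Since $\gcd(3,n)=1$, the subgroup $\langle 3\rangle$ is all of $\mathbb{Z}_n$, forcing the colouring to be constant and contradicting properness. Hence no $J$-colouring of $C_n$ uses three colours, and therefore $\sJ(C_n)=2$, which completes the proof.
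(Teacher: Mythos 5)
Your argument is correct, but there is no in-paper proof to compare it with: the corollary is recalled from \cite{10} and stated without proof, so your derivation is a genuine addition rather than a variant of the paper's. Your route --- the cardinality bound $|N[v_i]|=3$ forcing $\sJ(C_n)\le 3$, the explicit period-three colouring when $3\mid n$, and the telescoping of $c_{i-1}+c_i+c_{i+1}\equiv 0 \pmod 3$ to get $c_{i+2}=c_{i-1}$ and hence a constant colouring when $\gcd(3,n)=1$ --- is clean and complete. One point deserves to be made explicit: you work with the reading that every vertex must \emph{yield} a rainbow neighbourhood, i.e.\ that every $N[v_i]$ meets all colour classes, whereas Definition \ref{Defn-2.2} as literally worded only requires that every vertex \emph{belong to} some rainbow neighbourhood. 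Under the literal reading the corollary would fail: $C_4$ coloured $c_1,c_2,c_3,c_2$ is proper, uses three colours, and every vertex lies in the rainbow neighbourhood $N[v_2]$ or $N[v_4]$, yet the corollary asserts $\sJ(C_4)=2$. So the stronger ``yields'' interpretation is the one actually intended in \cite{10} and the one consistent with all the stated results; your proof is valid under it, but you should state that you are adopting it, since your key step (imposing the sum relation at \emph{every} index $i$) depends on it.
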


\begin{theorem}\label{Thm-2.3}
{\rm \cite{10}} Let $W_{n+1}=K_1+C_n$ be a wheel graph which admits a $J$-colouring. Then, 
\begin{equation*}
\sJ(W_{n+1})=
\begin{cases}
3 & \text{if}\ n\equiv 0\,({\rm mod}\ 2) \text{and}\ n\not\equiv 0\,({\rm mod}\ 3),\\
4 & \text{if}\ n\equiv 0\,({\rm mod}\ 3).
\end{cases}
\end{equation*}
\end{theorem}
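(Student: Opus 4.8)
The plan is to pin down $\sJ(W_{n+1})$ by first proving the upper bounds $\sJ(W_{n+1})\le 4$ and, in the even-but-not-divisible-by-$3$ case, $\sJ(W_{n+1})\le 3$, and then exhibiting explicit $J$-colourings that meet these bounds. For the upper bound I would argue as follows. Write $W_{n+1}=K_1+C_n$ with central vertex $u$ and rim cycle $v_1v_2\cdots v_n$. Since $u$ is adjacent to every rim vertex, in any proper colouring $u$ receives a colour used on no other vertex. Now fix a rim vertex $v_i$; assuming $n\ge 4$ its closed neighbourhood is $N[v_i]=\{u,v_{i-1},v_i,v_{i+1}\}$, a set of exactly four vertices. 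In a $J$-colouring $N[v_i]$ must contain every colour of the colouring, so at most four colours are used, i.e. $\sJ(W_{n+1})\le 4$. (The degenerate case $n=3$ gives $W_4=K_4$ with $\sJ=4$, which is consistent with the $n\equiv 0\pmod 3$ clause, so I may assume $n\ge 4$ throughout.)

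For the case $n\equiv 0\pmod 3$, I would simply construct a $4$-colouring: set $\varphi(u)=c_4$ and colour the rim cyclically $c_1,c_2,c_3,c_1,c_2,c_3,\dots$. Since $3\mid n$ this closes up consistently and is a proper colouring of $W_{n+1}$, and moreover every three consecutive rim vertices carry all of $c_1,c_2,c_3$. Hence for each $i$ we get $N[v_i]\supseteq\{c_1,c_2,c_3,c_4\}$, while $N[u]=V(W_{n+1})$ trivially contains all four colours. So this is a $J$-colouring on four colours, and combined with the upper bound, $\sJ(W_{n+1})=4$.

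For the case $n\equiv 0\pmod 2$ and $n\not\equiv 0\pmod 3$, I would first show that four colours are impossible. In a putative $4$-colouring, $u$ uses one colour, and for each $i$ the rainbow condition at $v_i$ forces the remaining three colours to all occur among $v_{i-1},v_i,v_{i+1}$; thus every window of three consecutive rim vertices is a permutation of a fixed $3$-set of colours. A short propagation argument then shows the rim colouring is periodic with period $3$: from $v_{i-1},v_i,v_{i+1}$ distinct and $v_i,v_{i+1},v_{i+2}$ distinct within the same $3$-set, the colour of $v_{i+2}$ is forced to equal that of $v_{i-1}$, so $\iota(v_{i+3})=\iota(v_i)$ for all $i$, whence $3\mid n$ — a contradiction. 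Therefore $\sJ(W_{n+1})\le 3$. Conversely I would exhibit a $3$-colouring: put $\varphi(u)=c_3$ and colour the rim alternately $c_1,c_2$, which is proper since $n$ is even; then $N[v_i]=\{c_1,c_2,c_3\}$ for every $i$ and $N[u]=V(W_{n+1})$, so this is a $J$-colouring on three colours and $\sJ(W_{n+1})=3$.

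The only nontrivial step is the periodicity argument in the third case, and even that is elementary; the rest is construction plus counting. I would also add a sentence noting that the two clauses are exhaustive under the standing hypothesis, since applying the cycle theorem quoted above to the rim shows that $W_{n+1}$ admits a $J$-colouring exactly when $n$ is even or $3\mid n$ (the cases $k=1$ and $k=2$ rim-colours being unavailable otherwise), so the excluded case $n$ odd with $3\nmid n$ never arises.
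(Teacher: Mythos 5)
Your argument is correct, but note that the paper itself offers no proof of this statement to compare against: Theorem \ref{Thm-2.3} is quoted verbatim from \cite{10} as a known result, so your write-up is a self-contained reconstruction rather than an alternative to anything in this paper. The reconstruction is sound: the bound $\sJ(W_{n+1})\le 4$ from $|N[v_i]|=4$, the cyclic $c_1,c_2,c_3$ rim colouring when $3\mid n$, the forced period-$3$ propagation ruling out four colours when $3\nmid n$, and the alternating two-colour rim when $n$ is even are exactly the natural steps, and your side remark that the hypothesis ``admits a $J$-colouring'' excludes only $n$ odd with $3\nmid n$ is consistent with the cycle theorem the paper also quotes. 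Two tiny points worth making explicit if this were written out in full: the step from $\iota(v_{i+3})=\iota(v_i)$ to $3\mid n$ needs the observation that otherwise $\gcd(3,n)=1$ would force the rim to be monochromatic, contradicting properness of the cycle colouring; and you are implicitly reading the definition of a $J$-colouring as requiring every vertex to \emph{yield} a rainbow neighbourhood (rather than merely to \emph{belong} to one), which is the reading the paper itself uses in its main proof and the only one under which the stated values of $\sJ$ are correct.
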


It immediately follows that Theorem \ref{Thm-2.3} applies directly to Jahangir graphs $J_{1,m} = W_{m+1}$. We now define a needed subset of integers.

\begin{definition}\label{Defn-2.4}{\rm
The \textit{$J$-Jahangir set of integers}, denoted by $\J$, is defined as $\J = \{a_j:a_0 = 1$, $a_i = a_{i-1} +3$, $i \geq 1\}$.
}\end{definition}

We now present our main result which characterises the subset of Jahangir graphs which admit a $J$-colouring.

\begin{theorem}\label{Thm-2.5}
A Jahangir graph $J_{n,m}$ admits a $J$-colouring if and only if $m$ is even and $d(v_i,v_{i+1})= n \in \J$, $1 \leq i\leq m-1$, where $v_j$ are the vertices of the cycle in $J_{n,m}$.
\end{theorem}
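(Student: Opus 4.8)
The plan is to argue in two directions. For the "only if" direction, I would suppose $J_{n,m}$ admits a $J$-colouring $\varphi$ and analyse what this forces on the central vertex $u$ and on the structure of the cycle $C_{nm}$. The central vertex $u$ has neighbours $v_1,\dots,v_m$ and a large closed neighbourhood, so in any $J$-colouring $N[u]$ must contain every colour; in particular $\varphi$ uses exactly $\sJ(J_{n,m})$ colours and they all appear among $\{u,v_1,\dots,v_m\}$. I expect the key local obstruction to come from the degree-$2$ vertices lying strictly between consecutive spokes $v_i$ and $v_{i+1}$: each such vertex $w$ has $|N[w]|=3$, so it can yield a rainbow neighbourhood only if at most $3$ colours are used in total, forcing $\sJ(J_{n,m})=3$ and, for each internal path of the cycle, a proper $3$-colouring in which every internal vertex sees all three colours. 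Applying Proposition~\ref{Prop-2.1} to these internal paths (which are paths on $n+1$ vertices once we include the two endpoints $v_i,v_{i+1}$, hence $n-1$ internal vertices) together with the rainbow requirement at $v_i$ and $v_{i+1}$ themselves will pin down the admissible lengths: the residue condition for a full $3$-colouring propagating along the path with the spoke endpoints both adjacent to $u$ yields exactly $n \equiv 1 \pmod 3$, i.e. $n \in \J$. Finally the parity condition on $m$ should drop out of the requirement that, with only $3$ colours available and the spokes all receiving colours distinct from $\varphi(u)$, the cyclic sequence $\varphi(v_1),\dots,\varphi(v_m)$ must be properly $2$-colourable around the "contracted" cycle on the $v_i$'s — which needs $m$ even (this is morally the $J$-colourability of $C_m$ in the regime where $3 \nmid m$ is not automatically available because the third colour is already committed to $u$).

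For the "if" direction, assuming $m$ even and $n \in \J$, I would exhibit an explicit $J$-colouring with $3$ colours. Colour $u$ with $c_3$. Colour the spokes alternately $c_1, c_2, c_1, c_2, \dots$ around the cycle (possible since $m$ is even), so that $N[u]$ already contains $c_1,c_2,c_3$. On each internal segment between $v_i$ (colour $c_1$, say) and $v_{i+1}$ (colour $c_2$), I would lay down the pattern that cycles through $c_1,c_2,c_3$ and check, using $n \equiv 1 \pmod 3$, that it closes up properly at both ends and that every internal vertex of the segment sees all three colours in its closed neighbourhood — this is exactly the $\sJ^*(P_{\cdot})=3$ construction of Proposition~\ref{Prop-2.1} fitted between two prescribed endpoint colours, and the arithmetic $n\in\J$ is precisely what makes the prescribed endpoints compatible. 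One then verifies the boundary vertices $v_i$ themselves yield rainbow neighbourhoods: $v_i$ is adjacent to $u$ (colour $c_3$), to one cycle-neighbour in each of its two incident segments, and the segment pattern can be arranged so those neighbours carry the two colours different from $\varphi(v_i)$. Hence every vertex yields a rainbow neighbourhood and the colouring is a $J$-colouring, giving $\sJ(J_{n,m}) = 3$ as well.

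I would organise the write-up as: (1) reduce to $3$ colours via the degree-$2$ internal vertices; (2) derive $n \in \J$ from the path analysis (Proposition~\ref{Prop-2.1}) plus the spoke-endpoint constraints; (3) derive $m$ even from the cyclic spoke colouring with the third colour reserved; (4) conversely, build the explicit colouring and verify the rainbow condition vertex-class by vertex-class. The main obstacle I anticipate is step (2)–(3): making rigorous the interaction between the "path" constraints on each internal segment and the "cycle" constraint on the spokes, since the two share the vertices $v_i$ and the colour $\varphi(u)=c_3$ is simultaneously constrained by both. In particular I would need to be careful that the rainbow condition at each $v_i$ does not over-constrain the segment patterns on its two sides, and that the arithmetic genuinely forces $n\equiv 1\pmod 3$ rather than merely $n\equiv 1\pmod 3$ or $n \equiv 0 \pmod 2$ as in the bare cycle case — the presence of the spoke at both ends is what kills the even-length alternative and leaves only $\J$.
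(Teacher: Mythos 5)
Your converse argument is genuinely different from---and considerably more substantial than---what the paper actually supplies. The paper's proof consists essentially of the forward construction only (the same alternating-spoke, period-three segment pattern you propose); for the converse it asserts that the claim ``follows immediately'' or ``through mathematical induction'', and its discussion of odd $m$ never addresses the remaining case $m$ even with $n \notin \J$ at all. Your mechanism is the right one: a degree-$2$ vertex $w$ strictly inside a segment has $|N[w]|=3$, so the palette is capped at three colours and every three consecutive cycle vertices must be rainbow, whence the colour sequence along each segment is periodic with period three. The residue analysis you defer does close, but note that it is not the path lengths alone that decide it---you must invoke the rainbow condition at $u$ and at the spokes, as you suspected: if $n\equiv 0\pmod 3$ the periodicity forces all spokes to receive the same colour, so $N[u]$ misses a colour; if $n\equiv 2\pmod 3$ both cycle-neighbours of each $v_i$ are forced to the colour $\varphi(u)$, so $N[v_i]$ misses a colour; only $n\equiv 1\pmod 3$, i.e.\ $n\in\J$, survives, and then consecutive spokes must alternate between the two colours other than $\varphi(u)$, forcing $m$ even. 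Carried out, this proves the converse for $n\geq 2$, which the paper does not.

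Two genuine gaps remain. First, your whole converse apparatus is vacuous when $n=1$: there are no degree-$2$ vertices, the palette is not capped at three, and in fact the ``only if'' direction is false there---by Theorem~\ref{Thm-2.3}, $J_{1,m}=W_{m+1}$ with $m\equiv 0\pmod 3$ odd admits a $J$-colouring with four colours, even though $m$ is odd. So the equivalence can only be proved for $n\geq 2$ (or with wheels explicitly excluded); your write-up must say so, and the paper's proof silently has the same hole. Second, your step (1) yields only an \emph{upper} bound of three colours, while your residue analysis presumes exactly three with $u$ carrying the third. When $n$ is even, $J_{n,m}$ is bipartite (all spokes lie in one part of the even cycle $C_{nm}$), and a proper $2$-colouring makes every closed neighbourhood rainbow; under a literal reading of Definition~\ref{Defn-2.2} this would make, say, $J_{2,4}$ with $2\notin\J$ admit a $J$-colouring. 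You need to fix the convention that the colourings under consideration use at least three colours---another point the paper glosses over.
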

\begin{proof}
Consider a cycle $C_m$ with $m$ even. Let the consecutive cycle vertices $v_1,v_2,v_3,\ldots, v_m$ be coloured say, $c_1,c_2,c_1,\ldots,c_1,c_2$. For the wheel $W_{m+1}$ and the central vertex $u$ having colour $c_3$, we have a $J$-colouring for the Jahangir graph $J_{1,m}$. The converse part which states that a Jahangir graph that admits a $J$-colouring as described has $m$ is even and $d(v_i,v_{i+1}) = 1 \in \J$, $1 \leq i\leq m-1$, follows immediately.

Now, if for all edges of $C_m$ we insert $n-1$, $n \in \J$ additional vertices, then $d(v_i,v_{i+1}) \in \J$. From Proposition \ref{Prop-2.1} in respect of the modified $J$-colouring of paths it follows that, if the vertices of a $v_iv_{i+1}$-path from $i=1,2,3,\ldots,m-1$ and those of the $v_mv_1$-path are assigned colours consecutively  $c_1,\underbrace{c_2,c_3,c_1},\ldots,\underbrace{c_2,c_3,c_1},\\ c_2,\underbrace{c_1,c_3,c_2},\ldots,\underbrace{c_1,c_3,c_2}c_1,\ldots,\underbrace{c_2,c_3,c_1},\ldots,\underbrace{c_2,c_3,c_1},c_2,\underbrace{c_1,c_3,c_2},\ldots,\underbrace{c_1,c_3,c_2}$, then we clearly have a $J$-colouring for the Jahangir graph, $J_{n,m}$. Also, $d(v_i,v_{i+1}) \in \J$, $1 \leq i\leq m-1$, where $v_j$ are the vertices defined in Definition \ref{Defn-2.1}. The converse part which states that a Jahangir graph that admits a $J$-colouring as described has $m$ is even and $d(v_i,v_{i+1})= n \in \J$, $1 \leq i\leq m-1$, follows through mathematical induction.

Now, let $m$ be odd. If $m\equiv 0\,({\rm mod}\ 3)$ the cycle $C_m$ and without loss of generality, has the consecutive vertex colours following the pattern say, $c_1,c_2,c_3$ or $c_2,c_3,c_1$ or $c_3,c_1,c_2$ (possibly in the reverse). In the Jahangir graph $J_{1,m}$ the central vertex is coloured say $c_4$. From Proposition \ref{Prop-2.1}, it follows that any $v_iv_{i+1}$-path and the $v_mv_1$-path has $\sJ^* = 3$ and hence no constant number of additional vertices can be inserted in each edge of $C_m$ to construct a $J$-colouring.

If $m$ is odd and $n\not\equiv 0\,({\rm mod}\ 3)$, the cycle $C_m$ does not admit a $J$-colouring. Therefore, the Jahangir graph $J_{1,m}$ cannot either. 
\end{proof}

Figure \ref{fig:J46} illustrates a $J$-colouring for the Jahangir graph $J_{4,6}$. Here, note that $a_1=4 \in \J$.

\begin{figure}[h!]
	\centering
	\begin{tikzpicture}[auto,node distance=2cm,
	thick,main node/.style={circle,draw,font=\sffamily\Large\bfseries}]
	\vertex (0) at (0:0) [label=below:$u$]{$c_3$};
	\vertex (1) at (0:4) [label=right:$v_{1}$]{$c_1$};
	\vertex (2) at (345:4) []{$c_2$};
	\vertex (3) at (330:4) []{$c_3$};
	\vertex (4) at (315:4) []{$c_1$};
	\vertex (5) at (300:4) [label=below:$v_{2}$]{$c_2$};
	\vertex (6) at (285:4) []{$c_1$};
	\vertex (7) at (270:4) []{$c_3$};
	\vertex (8) at (255:4) []{$c_2$};
	\vertex (9) at (240:4) [label=below:$v_{3}$]{$c_1$};
	\vertex (10) at (225:4) []{$c_2$};
	\vertex (11) at (210:4) []{$c_3$};
	\vertex (12) at (195:4) []{$c_1$};
	\vertex (13) at (180:4) [label=left:$v_{4}$]{$c_2$};
	\vertex (14) at (165:4) []{$c_1$};
	\vertex (15) at (150:4) []{$c_3$};
	\vertex (16) at (135:4) []{$c_2$};
	\vertex (17) at (120:4) [label=above:$v_{5}$]{$c_1$};
	\vertex (18) at (105:4) []{$c_2$};
	\vertex (19) at (90:4) []{$c_3$};
	\vertex (20) at (75:4) []{$c_1$};
	\vertex (21) at (60:4) [label=above:$v_{6}$]{$c_2$};
	\vertex (22) at (45:4) []{$c_1$};
	\vertex (23) at (30:4) []{$c_3$};
	\vertex (24) at (15:4) []{$c_2$};
	\path 
	(0) edge (1)
	(0) edge (5)
	(0) edge (9)
	(0) edge (13)
	(0) edge (17)
	(0) edge (21)
	(1) edge (2)
	(2) edge (3)
	(3) edge (4)
	(4) edge (5)
	(5) edge (6)
	(6) edge (7)
	(7) edge (8)
	(8) edge (9)
	(9) edge (10)
	(10) edge (11)
	(11) edge (12)
	(12) edge (13)
	(13) edge (14)
	(14) edge (15)
	(15) edge (16)
	(16) edge (17)
	(17) edge (18)
	(18) edge (19)
	(19) edge (20)
	(20) edge (21)
	(21) edge (22)
	(22) edge (23)
	(23) edge (24)
	(24) edge (1)
	;
	\end{tikzpicture}
	\caption{Jahangir graph $J_{4,6}$.}
	\label{fig:J46}
\end{figure}
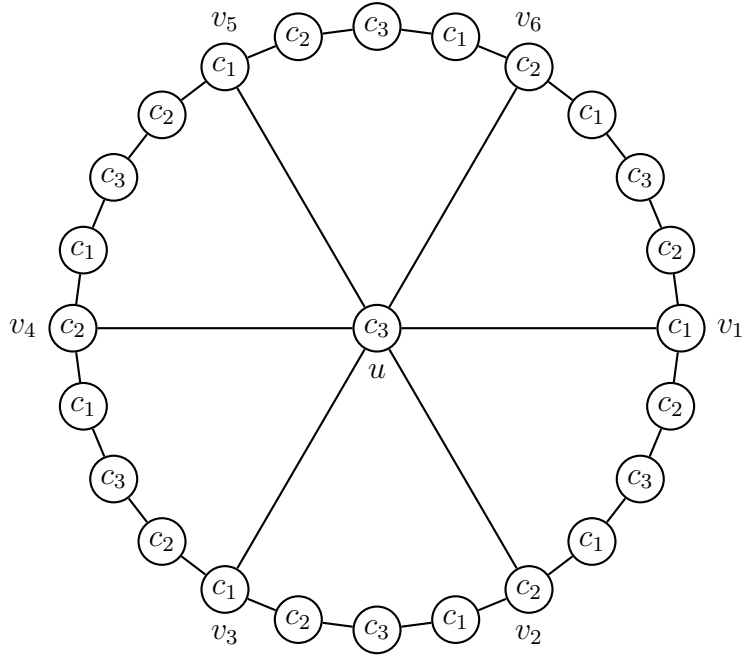

\begin{corollary}
Let $J_{n,m}$ be a Jahangir graph which admits a $J$-colouring. Then, $\chi(J_{n,m})=\sJ(J_{n,m})=3$.
\end{corollary}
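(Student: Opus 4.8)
The plan is to read off both equalities from the characterisation in Theorem~\ref{Thm-2.5} together with the explicit colourings constructed in its proof. Since we are told $J_{n,m}$ admits a $J$-colouring, Theorem~\ref{Thm-2.5} guarantees that $m$ is even and $n\in\J$, so exactly the colouring patterns exhibited in the proof of Theorem~\ref{Thm-2.5} are available to us.

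First I would establish $\chi(J_{n,m})=3$. The graph contains the triangle $u v_i w$, where $w$ is the first cycle-vertex after $v_i$ (this triangle exists because $u$ is adjacent to every $v_i$ and, when $n=1$, consecutive $v_i$'s are adjacent; when $n\geq 4$ one instead uses $u$, $v_i$ and the vertex adjacent to $v_i$ on the cycle — wait, that vertex need not be adjacent to $u$). More carefully: for $n=1$ the graph is the wheel $W_{m+1}$ with $m$ even, which contains $K_3$, so $\chi\geq 3$; for $n\in\J$ with $n\geq 4$, the spokes $u v_i$ and $u v_{i+1}$ together with any $v_i v_{i+1}$-path of odd length $n$ (note $n\equiv 1\pmod 3$ is odd) form an odd cycle through $u$, forcing $\chi\geq 3$ as well. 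On the other hand, the proof of Theorem~\ref{Thm-2.5} exhibits an explicit proper $3$-colouring in every admissible case, so $\chi(J_{n,m})\leq 3$. Hence $\chi(J_{n,m})=3$.

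Next I would show $\sJ(J_{n,m})=3$. By definition $\sJ(J_{n,m})\geq\chi(J_{n,m})=3$, since the colouring realising $\sJ$ is in particular proper, and conversely every $J$-colouring is a proper colouring so it uses at least $\chi(J_{n,m})=3$ colours — but we need the upper bound $\sJ\leq 3$. For this I would argue that a cycle vertex $v_i$ of degree $3$ (degree $2$ on the cycle plus the spoke to $u$) has closed neighbourhood of size $4$, but its two cycle-neighbours are themselves adjacent only to $v_i$ among $N[v_i]$, so in fact the structure of $N[v_i]$ forces the colour count: more simply, consider an interior cycle vertex $x$ of degree $2$ (present whenever $n\geq 2$); then $|N[x]|=3$, so $N[x]$ can contain at most $3$ colours, hence a $J$-colouring uses at most $3$ colours. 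When $n=1$ there are no such degree-$2$ vertices, but then $J_{1,m}=W_{m+1}$ and the bound $\sJ=3$ for even $m\not\equiv 0\pmod 3$ is already given by Theorem~\ref{Thm-2.3} — and note that $n=1\in\J$ together with $m$ even is consistent with $m\not\equiv 0\pmod 3$ only in those cases, which is exactly when $W_{m+1}$ is $J$-colourable. Combining, $\sJ(J_{n,m})=3$ in all admissible cases, completing the proof.

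The main obstacle I anticipate is the case split on $n$: the clean degree-$2$-vertex argument for the upper bound $\sJ\leq 3$ only works when $n\geq 2$, so the wheel case $n=1$ must be handled separately by invoking Theorem~\ref{Thm-2.3}, and one must check that "admits a $J$-colouring" in the $n=1$ case indeed lands in the branch where $\sJ(W_{m+1})=3$ rather than $4$ — this is forced because $1\in\J$ means $1\equiv 1\pmod 3$, which is automatic, and the $J$-colourability of $W_{m+1}$ with $m$ even already rules out $m\equiv 0\pmod 3$ is \emph{not} automatic, so some care with Theorem~\ref{Thm-2.3}'s hypotheses is the delicate point. Everything else is immediate from the constructions already in hand.
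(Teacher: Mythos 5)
The paper offers no proof of this corollary, so there is nothing to compare line by line; but your attempt exposes two genuine gaps, and both are fatal rather than reparable. First, your lower bound $\chi(J_{n,m})\geq 3$ rests on the parenthetical claim that ``$n\equiv 1\pmod 3$ is odd''. This is false: $4\in\J$ is even. Worse, whenever $n\in\J$ is even and $m$ is even, the graph $J_{n,m}$ is bipartite --- the outer cycle $C_{nm}$ has even length, the vertices $v_1,\dots,v_m$ sit at mutually even distances along it and hence all lie in one part of its bipartition, and $u$ can be placed in the other part --- so $\chi(J_{n,m})=2$. The graph $J_{4,6}$ of Figure~\ref{fig:J46} is already a counterexample to the equality $\chi(J_{n,m})=3$. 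There is no odd cycle to be found, so this half of your argument cannot be patched.

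Second, the point you yourself flag as ``delicate'' in the $n=1$ case is in fact a second counterexample rather than a gap you could close. Take $m=6$: $J_{1,6}=W_7$ admits a $J$-colouring by Theorem~\ref{Thm-2.5} ($m$ even, $1\in\J$), but $6\equiv 0\pmod 3$, so Theorem~\ref{Thm-2.3} gives $\sJ(W_7)=4$, not $3$ (colour $C_6$ with $c_1,c_2,c_3,c_1,c_2,c_3$ and the hub with $c_4$). Hence the upper bound $\sJ\leq 3$ fails for $n=1$ and $m\equiv 0\pmod 6$, and your hope that $J$-colourability of $W_{m+1}$ ``rules out $m\equiv 0\pmod 3$'' cannot be realised. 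What does survive of your argument is worth keeping: the degree-two-vertex bound is correct and clean for all $n\geq 2$ (a subdivision vertex $x$ has $|N[x]|=3$, so a $J$-colouring uses at most three colours), and together with the explicit three-colour $J$-colouring constructed in Theorem~\ref{Thm-2.5} it yields $\sJ(J_{n,m})=3$ for every admissible $J_{n,m}$ with $n\geq 2$. The corollary as stated, however, fails both in its $\chi$ claim (even $n\in\J$) and in its $\sJ$ claim ($n=1$, $m\equiv 0\pmod 6$), so the statement itself must be restricted before any proof can go through.
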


\subsection{Special Cordial Jahangir graphs}

Recall that a binary vertex labeling of a graph $G$ is called a \textit{cordial labeling} of $G$ (see \cite{12}) if $|v_f(0) - v_f(1)| \leq 1$ and $|e_f(0) - e_f(1)| \leq 1$. A graph $G$ is cordial if it admits a cordial labeling. In \cite{2}, it is shown that the Jahangir graphs $J_{2k-1,4\ell}$, $k \geq 1$, $\ell \geq 1$ are cordial. In our notation $J_{n,m}$, it means that $n$ belongs to the set of all positive odd integers and $m \in \{4\ell : \ell \in \N\}$. Hence, it follows from Theorem \ref{Thm-2.5} that since $m \in \{4\ell : \ell \in \N\}$ renders $m$ even, that if, $n-1 \equiv 0~(mod~3)$, the Jahangir graphs, $J_{n,m}$ are both cordial and admit a $J$-colouring.

In \cite{2}, it was also shown that the Jahangir graphs $J_{4k-1, 4\ell +2}$, $k \geq 1$, $\ell \geq 1$ are cordial. It means that $n \in \{b_i: b_0 = 3~and~b_{i+1} = b_i + 4, ~ i\geq 1 \}$ and $m \in \{c_j: c_0 = 6~and~c_{j+1} = c_j + 4,~j\geq 1\}$. Hence, since $m$ is even the Jahangir graphs, $J_{n,m}$ are both cordial and admit a $J$-colouring if $n-1 \equiv 0~(mod~3)$.

\section{Conclusion}

In \cite{7}, $J$-colouring of Chithra graphs was considered. This paper considered same for Jahangir graphs. The fact that all graphs admit a $\chi$-colouring but not necessarily a $J$-colouring leaves a wide scope for further research through the various families of graph types. From \cite{7}, we have learnt that, whether or not a graph $G$ admits a $J$-colouring, its Mycielski graph $\mu(G)$ cannot have a $J$-colouring. Hence, there exist a family for which the complete family does not admit a $J$-colouring. Amongst the few families considered thus far, only subsets could be characterised which admit $J$-colouring. It is therefore known that graphs in general do not all admit $J$-colouring. It remains an interesting problem to find a family of graph type for which all admit $J$-colouring, if such exists. The results from \cite{6}, stating that a graph $G$ of order $n$ admits a $J$-colouring if and only if $r_\chi(G)= n$ and alternatively, a graph $G$ admits a $J$-colouring if and only if, in a $\chi^-$-colouring of $G$ each $v \in V(G)$ yields a rainbow neighbourhood, could be key in this search.

It is noted that $\sJ(C_6) = 3$ and $\sJ(\overline{C_6}) = 3$. However, $\sJ(C_9) =3$ whilst $\overline{C_9}$ does not admit a $J$-colouring. It remains open to find graphs $G$ and $\overline{G}$ which both admit a $J$-colouring.

\end{document}